\newtoks\prt 
\newtheorem{thm}{Theorem}[section] 
\newtheorem{lemma}[thm]{Lemma} 
\newtheorem{prop}[thm]{Proposition} 
\newtheorem{example}[thm]{Example}
\theoremstyle{definition}
\def\eqn#1$$#2$${\begin{equation}\label#1#2\end{equation}}
\def\diam{\operatorname{diam}} 
\def\ep{\varepsilon} 
\def\en{\mathbb N}
\def\r{|} 
\def\ov{\overline}
\def\wh{\widehat} 
\def \reg {\partial _{\kern1pt\text{reg}}} 
\def\dd{\operatorname{d}}
\def\dh{\widehat{\operatorname{d}}}
\def\clu#1#2{\operatorname{clust}_{#1^{**}}(#2)}
\def\de#1{\delta\left(#1\right)}
\def\nc#1{\operatorname{ca}\left(#1\right)}
\newcommand{\neimp}{\rotatebox{45}{$\Rightarrow$}}
\newcommand{\seimp}{\rotatebox{315}{$\hskip-10pt\Rightarrow$}} 
\begin{document}

\title[Quantitative Schur property]{On a difference between quantitative weak sequential completeness and the quantitative Schur property}
\author{O.F.K. Kalenda and J. Spurn\'y}

\address{Department of Mathematical Analysis \\
Faculty of Mathematics and Physic\\ Charles University\\
Sokolovsk\'{a} 83, 186 \ 75\\Praha 8, Czech Republic}
\email{kalenda@karlin.mff.cuni.cz}
\email{spurny@karlin.mff.cuni.cz}

\subjclass[2010]{46B20, 46B25}
\keywords{weakly sequentially complete Banach space; Schur property; quantitative versions of weak sequential completeness; quantitative versions of the Schur property; $L$-embedded Banach space}

\thanks{The authors were supported by the Research Project
MSM~0021620839 from the Czech Ministry of Education. The first author was moreover supported in part by the grant GAAV IAA 100190901. The second author was partly supported by the grant GA\v CR  201/07/0388.}

\begin{abstract}
We study quantitative versions of the Schur property and weak sequential completeness, proceeding thus with investigations started by G.~Godefroy, N.~Kalton and D.~Li and continued by H.~Pfitzner and the authors.
We show that the Schur property of $\ell_1$ holds quantitatively in the strongest possible way and construct an example of a Banach space which is quantitatively weakly sequentially complete, has the Schur property but fails the quantitative form of the Schur property.
\end{abstract}
\maketitle

\section{Introduction}

We recall that a Banach space is \emph{weakly sequentially complete} if every weakly Cauchy sequence is weakly convergent. Further, a Banach space has the \emph{Schur property} if any weakly convergent sequence is norm convergent. This is easily seen to be equivalent to the fact that any weakly Cauchy sequence is norm Cauchy (and hence norm convergent), and thus any space with the Schur property is weakly sequentially complete. A classical example of a Banach space with the Schur property is the space $\ell_1$ of all absolutely summable sequences. An example of a weakly sequentially complete Banach space without the Schur property is the Lebesgue space $L_1(0,1)$.

In the present paper we study a quantitative version of the Schur property and its relation to quantitative weak sequential completeness. A quantitative version of weak sequential completeness was studied in \cite{GoKaLi2,gode-handbook,ka-pf-sp}. The existence of a quantitative Schur property was pointed out to us by M.~Fabian who observed in 2005 that the space $\ell_1$ has, in the terminology defined below, the $5$-Schur property. The referee pointed out to us that in the paper \cite{GoKaLi1} the authors use the notion of ``$1$-strong Schur property'' which is another quantification of the Schur property. We explain the relation to our notions in the last section. Before stating the known results and our contribution we shall define the quantitative properties.

For a bounded sequence $(x_k)$ in a Banach space $X$, we write
$\clu{X}{x_k}$ for the set of all weak* cluster points of $(x_k)$ in $X^{**}$ and by $\de{x_k}$ we denote its diameter. This quantity measures in a way how far the sequence is from being weakly Cauchy. Similarly, the quantity
\[
\nc{x_k}=\inf_{n\in\en} \diam\{x_k\colon k\geq n\}
\] 
measures how far the sequence is from being norm Cauchy.

Further, if  $A$, $B$ are  nonempty subsets of a Banach space $X$, then 
\[
\dd(A,B)=\inf\{\|a-b\|\colon a\in A, b\in B\}
\]
denotes the
usual distance between $A$ and $B$  and the Hausdorff non-symmetrized distance from $A$ to $B$ is defined by
\[
   \dh(A,B)= \sup\{\operatorname{d}(a,B): a\in A\}.
\]

We will say that a Banach space $X$ has the \emph{$C$-Schur property} (where $C\ge 0$) if
$$\nc{x_k}\le C \de{x_k}$$
for each bounded sequence $(x_k)$ in $X$. Similarly, $X$ is said to be \emph{$C$-weakly sequentially complete} if
$$\dh(\clu{X}{x_k},X)\le C \de{x_k}$$
for each bounded sequence $(x_k)$ in $X$. It is clear that any space with the $C$-Schur property has the Schur property and that any $C$-weakly sequentially complete space is weakly sequentially complete. It follows from Proposition~\ref{prop-easy} below that $C$-Schur property implies $C$-weak sequential completeness.

It is proved in \cite[Lemma IV.7]{GoKaLi2} that any $L$-embedded Banach space is $1$-weakly sequentially complete. We recall that a Banach space $X$ is called \emph{$L$-embedded} if there exists a projection $P:X^{**}\to X$ such that
\[
\|Px^{**}\|+\|(I-P)x^{**}\|=\|x^{**}\|,\quad x^{**}\in X^{**}.
\]
This result was  mentioned in \cite[p.\ 829]{gode-handbook} together with the question which weakly sequentially complete Banach spaces satisfy a quantitative version of this property. Recently some new results were obtained in \cite{ka-pf-sp}. It is proved there that any $L$-embedded space is $\frac12$-weakly sequentially complete, where the constant $\frac12$ is optimal as witnessed by the space $\ell_1$. The quoted paper further contains  an example of a~Schur space which is not $C$-weakly sequentially complete for any $C\ge0$. Conversely, $C$-weak sequential completeness does not imply Schur property (consider for example reflexive spaces or the space $L^1(0,1)$).

Inspired by the above mentioned results and a remark of M.\ Fabian, we investigate in this paper a quantification of the Schur property. We will assume that our Banach spaces are real. However, although some methods work only in real spaces, almost all the results are valid for complex spaces as well. We will discuss it in the final section.

The first proposition contains two easy inequalities and their consequence on the relationship of the quantitative Schur property and quantitative weak sequential completeness.

\begin{prop}\label{prop-easy} Let $(x_k)$ be a bounded sequence in a Banach space $X$. Then the following inequalities hold:
\begin{eqnarray}\label{ca-dh}
\dh(\clu{X}{x_k}, X)\leq \nc{x_k}, \\
\label{de-ca} \de{x_k}\leq \nc{x_k}.
\end{eqnarray}
In particular, if $X$ has the $C$-Schur property, then it is $C$-weakly sequentially complete.
\end{prop}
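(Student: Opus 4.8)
The plan is to derive both inequalities from a single elementary estimate relating the values $f(x_k)$ of a tail of the sequence to the value $x^{**}(f)$ at a weak* cluster point. Write $c=\nc{x_k}$. Since $\nc{x_k}$ is defined as an infimum, for every $\ep>0$ there is $n\in\en$ with $\diam\{x_k\colon k\ge n\}<c+\ep$; on this tail any two members of the sequence are within $c+\ep$ in norm. On the other hand, if $x^{**}$ is a weak* cluster point of $(x_k)$, then for each $f\in X^*$ with $\|f\|\le1$, each $\ep>0$ and each $n\in\en$ there is an index $k\ge n$ with $|x^{**}(f)-f(x_k)|<\ep$, because $\{z^{**}\colon|z^{**}(f)-x^{**}(f)|<\ep\}$ is a weak* neighborhood of $x^{**}$ and cluster points of a sequence are approached by arbitrarily late indices. (Here $\clu{X}{x_k}$ is nonempty, since the bounded sequence lies in a weak* compact ball of $X^{**}$.)

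For the inequality $\de{x_k}\le\nc{x_k}$, I would fix two weak* cluster points $x^{**},y^{**}$ and estimate $\|x^{**}-y^{**}\|=\sup_{\|f\|\le1}|x^{**}(f)-y^{**}(f)|$. Fix $f$ with $\|f\|\le1$ and $\ep>0$, and choose $n$ with $\diam\{x_k\colon k\ge n\}<c+\ep$. Applying the cluster-point property to $x^{**}$ and to $y^{**}$ separately, pick $k\ge n$ and $l\ge n$ with $|x^{**}(f)-f(x_k)|<\ep$ and $|y^{**}(f)-f(x_l)|<\ep$. A triangle inequality then gives
\[
|x^{**}(f)-y^{**}(f)|\le\ep+|f(x_k)-f(x_l)|+\ep\le 2\ep+\|x_k-x_l\|<c+3\ep,
\]
because $k,l\ge n$. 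Letting $\ep\to0$ and taking the supremum over $f$ yields $\|x^{**}-y^{**}\|\le c$, hence $\de{x_k}\le c$.

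For the inequality $\dh(\clu{X}{x_k},X)\le\nc{x_k}$, I would fix a weak* cluster point $x^{**}$ and show $\dd(x^{**},X)\le c$. Given $\ep>0$, choose $n$ with $\diam\{x_k\colon k\ge n\}<c+\ep$ and fix a \emph{single} index $m\ge n$; the candidate nearest point is $x_m\in X$. For each $f$ with $\|f\|\le1$ pick $k\ge n$ with $|x^{**}(f)-f(x_k)|<\ep$ and estimate $|x^{**}(f)-f(x_m)|\le\ep+\|x_k-x_m\|<c+2\ep$. Taking the supremum over $f$ gives $\|x^{**}-x_m\|\le c+2\ep$, so $\dd(x^{**},X)\le c+2\ep$, and letting $\ep\to0$ gives $\dd(x^{**},X)\le c$. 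As $x^{**}$ was arbitrary in $\clu{X}{x_k}$, the supremum yields $\dh(\clu{X}{x_k},X)\le c$.

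Finally, the displayed consequence is immediate: if $X$ has the $C$-Schur property, then $\nc{x_k}\le C\de{x_k}$ for every bounded sequence, so combining this with the already established inequality \eqref{ca-dh} gives $\dh(\clu{X}{x_k},X)\le\nc{x_k}\le C\de{x_k}$, which is exactly $C$-weak sequential completeness. The only point needing care is the order of quantifiers in the second inequality: one must commit to a single tail element $x_m$ before ranging over the functionals $f$, since the nearest-point candidate in $X$ must not depend on $f$. The norm closeness of the whole tail is precisely what makes this uniform choice succeed.
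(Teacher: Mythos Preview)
Your proof is correct. The paper takes a slightly different, more packaged route: it observes that weak* lower semicontinuity of the norm forces
\[
\diam\{x_k\colon k\ge n\}=\diam\,\wscl{\{x_k\colon k\ge n\}},
\]
and then both inequalities fall out at once, since every weak* cluster point lies in each of these weak* closures (giving \eqref{ca-dh} by comparing to any $x_m$ in the tail, and \eqref{de-ca} because $\clu{X}{x_k}$ is the intersection of these closures). Your argument is really the same idea unwound functional by functional: where the paper invokes lower semicontinuity of the norm on the weak* closure, you instead approximate $x^{**}(f)$ by $f(x_k)$ for a late index and let $\ep\to0$. The paper's formulation is a bit slicker and reusable (one line yields both estimates), while yours has the virtue of being completely self-contained and of making explicit the quantifier issue you flag at the end---that the approximating element $x_m\in X$ must be chosen before the functional $f$.
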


\begin{proof} We first observe that by the weak* lower semicontinuity of the norm we have
\[
\diam \{x_k\colon k\geq n\}=\diam \ov{\{x_k\colon k\geq n\}}^{w*},
\]
and thus
\[
\dd(x^{**},X)\leq \diam \ov{\{x_k\colon k\geq n\}}^{w*},\quad  x^{**}\in \clu{X}{x_k}, n\in\en.
\]
From this we deduce that
\[
\nc{x_k}=\inf_{n\in\en} \diam\{x_k\colon k\geq n\}=\inf_{n\in\en}\diam \ov{\{x_k\colon k\geq n\}}^{w*}\geq \dh(\clu{X}{x_k},X),
\]
and
\[
\nc{x_k}\geq \diam \bigcap_{n=1}^\infty \ov{\{x_k\colon k\geq n\}}^{w*}=\de{x_k}.
\]

This proves \eqref{ca-dh} and \eqref{de-ca}. The remaining statement follows immediately from \eqref{ca-dh}.
\end{proof}

Inequalities \eqref{de-ca} and \eqref{ca-dh} are optimal. Indeed, let $X=c_0$ and $(y_k)$ be the summing basis (i.e., $y_k=(1,\dots,1,0,\dots)$ where the last `$1$' is on the $k$-th place). Denote by $(x_k)$ the sequence
$$y_1,0,y_2,0,y_3,0,\dots$$
Then clearly $\de{x_k}=\nc{x_k}=\dh(\clu{X}{x_k},X)=1$.

Inequality \eqref{de-ca} shows in particular that a nontrivial space cannot have the $C$-Schur property for any $C<1$. Another trivial consequence is that in spaces with the $C$-Schur property the quantities $\delta$ and $\operatorname{ca}$ are equivalent.

Let us remark that the situation with weak sequential completeness is different -- any $L$-embedded space is $\frac12$-weakly sequentially complete by \cite{ka-pf-sp} and reflexive spaces are trivially $0$-weakly sequentially complete. The constant $\frac12$ is optimal by the following proposition.

\begin{prop}\label{reflexive} Let $X$ be a Banach space which is $C$-weakly sequentially complete
for some $C<\frac12$. Then $X$ is reflexive.

Moreover, even a stronger version holds: Suppose that there is $C<\frac12$ such that
$$\dd(\clu{X}{x_k},X)\le C \de{x_k}$$
for each bounded sequence $(x_k)$ in $X$. Then $X$ is reflexive.
\end{prop}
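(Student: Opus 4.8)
The plan is to establish the contrapositive of the moreover statement: assuming $X$ is \emph{not} reflexive, I will produce, for every prescribed $C<\tfrac12$, a bounded sequence $(x_k)$ in $X$ with $\dd(\clu{X}{x_k},X)>C\,\de{x_k}$. Since $\dd\le\dh$, such a sequence also violates the weaker hypothesis, so the first assertion follows from the second. I would organize the construction according to Rosenthal's $\ell_1$ theorem.

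First suppose $X$ contains no isomorphic copy of $\ell_1$. Since $X$ is not reflexive, the Eberlein--\v{S}mulian theorem provides a bounded sequence in $X$ with no weakly convergent subsequence; by Rosenthal's theorem it admits a weakly Cauchy subsequence $(y_k)$, which is then weakly Cauchy but not weakly convergent. Being weakly Cauchy, $(y_k)$ has exactly one weak$^*$ cluster point $y^{**}$, and failing to be weakly convergent forces $y^{**}\in X^{**}\setminus X$. Hence $\clu{X}{y_k}=\{y^{**}\}$, so that $\de{y_k}=0$ while $\dd(\clu{X}{y_k},X)=\dist(y^{**},X)>0$; the desired inequality fails for $(y_k)$ for \emph{every} $C\ge0$. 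Thus the constant $\tfrac12$ is irrelevant in this case.

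It remains to treat the case $\ell_1\subseteq X$. Here I would use James' non-distortion theorem to fix, for a small $\epsilon>0$, a normalized sequence $(x_k)$ that is $(1+\epsilon)$-equivalent to the unit vector basis of $\ell_1$, and study $K=\clu{X}{x_k}$. Because $\|x_k\|=1$ we have $\de{x_k}=\diam K\le2$; conversely, extending the functionals $x_A^*=\sum_{i\in A}x_i^*$ from $\overline{\operatorname{span}}\{x_k\}$ to $X^*$ by Hahn--Banach (with norm at most $1+\epsilon$) and pairing $x_A^*-x_{\en\setminus A}^*$ with two cluster points coming from free ultrafilters separated by $A$, one obtains $\de{x_k}\ge\frac{2}{1+\epsilon}$, so $\de{x_k}\to2$ as $\epsilon\to0$. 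If, in addition, every point of $K$ lies at distance at least $1-\epsilon'$ from $X$, then $\dd(K,X)\ge1-\epsilon'$ and therefore $\dd(K,X)/\de{x_k}\to\tfrac12$; choosing $\epsilon,\epsilon'$ small enough makes this ratio exceed $C$, contradicting the hypothesis and completing the proof.

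The hard part is exactly the distance estimate $\dist(a,X)\ge1-\epsilon'$ for $a\in K$, i.e.\ the assertion that the cluster points are far not merely from $\overline{\operatorname{span}}\{x_k\}$ but from \emph{all} of $X$. The summing functional of the $\ell_1$-basis certifies distance $1$ to that span, but converting it into a norm-one element of $X^{\perp}\subseteq X^{***}$ that stays large on $K$ is delicate, since $\ell_1$ need not be complemented in $X$: a Hahn--Banach extension followed by subtracting its restriction to $X$ annihilates $a$, precisely because $a$ is a weak$^*$ limit of vectors of $X$. I would attempt to overcome this by realizing the annihilating functional as a weak$^*$ cluster point, along a suitable ultrafilter, of the tail functionals $x_A^*$, and by exploiting the almost-$L$-embedded structure that the $\ell_1$-configuration forces in the bidual to bound its action on $K$ from below while keeping its norm close to $1$. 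Making this constant sharp enough for the ratio to cross $\tfrac12$ is the decisive quantitative point.
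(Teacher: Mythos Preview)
Your Case~1 (no $\ell_1$) is correct and complete. Case~2 is not: you yourself locate the gap --- the assertion that every weak$^*$ cluster point of an almost-isometric $\ell_1$-basis lies at distance close to~$1$ from $X$ --- and the sketch you offer (``realize the annihilating functional as a weak$^*$ cluster point of tail functionals'' and invoke an unspecified ``almost-$L$-embedded structure'') does not close it. The obstacle you name is real: the summing functional certifies distance $1$ only to $\overline{\operatorname{span}}\{x_k\}$, and with no complementation hypothesis there is no evident way to export that distance to all of $X$. As written, the proposal is therefore incomplete precisely at the quantitative point that matters.

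The paper's proof sidesteps the Rosenthal dichotomy altogether. It invokes the boundary theorem of Granero--Hern\'andez--Pfitzner: if $X$ is non-reflexive, then for every $\varepsilon>0$ there is $x^*\in X^*$ such that every $x^{**}\in B_{X^{**}}$ with $x^{**}(x^*)=\|x^*\|$ satisfies $\dd(x^{**},X)\ge 1-\varepsilon$ (otherwise the norming functionals would form a boundary of $B_{X^{**}}$ with $\dh(\cdot,X)<1$, contradicting that theorem). One then takes any sequence $(x_k)\subset B_X$ with $x^*(x_k)\to\|x^*\|$; every element of $\clu{X}{x_k}$ norms $x^*$ and hence lies at distance $\ge 1-\varepsilon$ from $X$, while trivially $\de{x_k}\le 2$. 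This gives the contrapositive in one stroke, with what you call the ``hard part'' absorbed into the cited result on boundaries.
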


\begin{proof} It is clear that first part follows from the stronger version, so
let us show the stronger statement. Let $X$ be a non-reflexive Banach space.
Let $\varepsilon>0$ be arbitrary. 
It follows from \cite[Theorem 1]{GHP} that there is $x^*\in X^*$ such that any $x^{**}\in B_{X^{**}}$ such that $x^{**}(x^*)=\|x^*\|$ satisfies $\dd(x^{**},X)\ge 1-\varepsilon$. 

Indeed, suppose that for any $y^*\in X^*$ there is some $x^{**}=x^{**}_{y^*}\in B_{X^{**}}$ such that $x^{**}(y^*)=\|y^*\|$ and $\dd(x^{**},X)< 1-\varepsilon$. Let $B=\{x^{**}_{y^*}: y^*\in X^*\}$.
Then $B$ is a boundary for $X^*$ and $\dh(B,X)<1$, which contradicts  \cite[Theorem 1]{GHP}.

So, let $x^*$ be as in the first paragraph. 
Let $(x_k)$ be a sequence in $B_X$ such that $x^*(x_k)\to \|x^*\|$. Then clearly
$\de{x_k}\le 2$. Moreover, $\dd(\clu{X}{x_k},X)\ge 1-\varepsilon$, as any 
$x^{**}\in\clu{X}{x_k}$ satisfies $x^{**}(x^*)=\|x^*\|$.
This completes the proof.
\end{proof}

Let us remark, that the converse of the previous proposition is trivially valid as well, as any reflexive space is $0$-weakly sequentially complete.

We continue by our first main result which is an improvement of Schur's theorem for $\ell_1$.  

\begin{thm}\label{positive} The space $\ell_1$ has the $1$-Schur property,
i.e., 
\[
\nc{x_k}= \de{x_k}.
\]
for any bounded sequence $(x_k)$ in $\ell_1$. 
\end{thm}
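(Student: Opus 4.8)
By inequality \eqref{de-ca} we already have $\de{x_k}\le\nc{x_k}$ for every bounded sequence, so the whole content of the theorem is the reverse inequality $\nc{x_k}\le\de{x_k}$. The plan is to argue by contradiction. Suppose $\nc{x_k}>\de{x_k}$, write $\delta=\de{x_k}$ and fix $\ep>0$ with $\nc{x_k}>\delta+2\ep$. Since $\nc{x_k}=\inf_n\diam\{x_k\colon k\ge n\}$ is an infimum of a non-increasing sequence, every tail has diameter $>\delta+2\ep$, and hence we may choose indices $k_1<l_1<k_2<l_2<\cdots$ with $\|x_{k_i}-x_{l_i}\|_1>\delta+2\ep$ for all $i$. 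The goal will be to manufacture from these pairs two weak* cluster points of the original sequence $(x_k)$ whose mutual distance exceeds $\delta$, contradicting $\diam\clu{\ell_1}{x_k}=\delta$.

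The first step is a diagonal extraction: passing to a subsequence I may assume that both $x_{k_i}\to b$ and $x_{l_i}\to c$ coordinatewise, where $b,c\in\ell_1$ by Fatou's lemma (the sequence being bounded in $\ell_1$). Setting $d_i=x_{k_i}-x_{l_i}$ and $h=b-c\in\ell_1$ we then have $d_i\to h$ coordinatewise while $\|d_i\|_1>\delta+2\ep$. The key construction is a single functional $\psi\in B_{\ell_\infty}$ that simultaneously norms the ``fixed'' part $h$ and the ``escaping'' part of the $d_i$. Concretely, I first fix $N$ with $\sum_{j>N}|h_j|<\ep/4$ and put $\psi_j=\operatorname{sign}(h_j)$ for $j\le N$; then, by a sliding hump argument, I select a further subsequence $(i_m)$ and disjoint blocks $N<p_1\le q_1<p_2\le q_2<\cdots$ so that the block $[p_m,q_m]$ captures all but an $\ep$-negligible part of the mass of $d_{i_m}$ beyond $N$, and I set $\psi_j=\operatorname{sign}(d_{i_m}(j))$ on $[p_m,q_m]$ (and $\psi_j=0$ elsewhere). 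Splitting $\langle d_{i_m},\psi\rangle$ into the contribution on $\{j\le N\}$ (which tends to $\sum_{j\le N}|h_j|$ by coordinatewise convergence) and the contribution on $[p_m,q_m]$ (which recovers almost all of $\sum_{j>N}|d_{i_m}(j)|$), the two pieces add up to essentially $\|d_{i_m}\|_1$, so that $\langle d_{i_m},\psi\rangle>\delta+\ep$ for every $m$.

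Finally I fix a free ultrafilter $\U$ on $\en$ and set $u=w^*\text{-}\lim_{\U}x_{k_{i_m}}$ and $v=w^*\text{-}\lim_{\U}x_{l_{i_m}}$, which exist by weak* compactness of bounded sets in $\ell_1^{**}$ and are genuine weak* cluster points of the original sequence $(x_k)$, hence lie in $\clu{\ell_1}{x_k}$. Since $\langle u-v,\psi\rangle=\lim_{\U}\langle d_{i_m},\psi\rangle\ge\delta+\ep$ and $\|\psi\|_\infty\le1$, we get $\|u-v\|\ge\delta+\ep>\delta=\diam\clu{\ell_1}{x_k}$, the desired contradiction. The main obstacle is the sliding hump step, i.e.\ the $\ep$-bookkeeping needed to choose the indices $i_m$ and the blocks $[p_m,q_m]$ so that each $d_{i_m}$ has negligible mass outside its own block (including on the blocks belonging to other indices); everything else is routine. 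I note that the $L$-embedded decomposition $\ell_1^{**}=\ell_1\oplus_1(c_0)^\perp$ gives an alternative way to organize this estimate, by splitting $u-v$ into its $\ell_1$-part $b-c$ and its singular part and adding the two norms, but building the single norming functional $\psi$ avoids invoking that structure.
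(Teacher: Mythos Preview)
Your proposal is correct and follows essentially the same route as the paper's proof: extract far-apart pairs from the sequence, pass to a pointwise-convergent subsequence of the differences, build a single norming functional in $\ell_\infty$ by putting the signs of the pointwise limit on an initial segment and the signs of the successive differences on disjoint sliding-hump blocks (the paper isolates this hump construction as a separate lemma), and then pass to weak$^*$ cluster points. The only cosmetic differences are your framing by contradiction and your use of a free ultrafilter at the end, where the paper argues directly with an arbitrary $c<\nc{x_k}$ and extracts a further subsequence along which the functional values of both component subsequences converge.
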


It follows from \cite{ka-pf-sp} that there is a Banach space with the Schur property which fails its quantitative version. Indeed, it is constructed there a space with the Schur property which is not $C$-weakly sequentially complete for any $C\ge 0$. It follows from Proposition~\ref{prop-easy} that the same space fails the $C$-Schur property for each $C\ge0$.
Our second result shows that a quantitative version of the Schur property  may fail even for an $L$-embedded space (which is $\frac12$-weakly sequentially complete).

\begin{example}\label{el-ex}
There exists a separable $L$-embedded Banach space with the Schur property 
which fails the $C$-Schur property for every $C\ge0$.
\end{example}

Following the suggestion of the referee, we include the following diagramm of implications between the classes of Banach spaces we study in this paper (``wsc'' is an abbreviation of ``weakly sequentially complete'').

$$\begin{array}{ccccccc}
&&&&\mbox{Schur}&&\\
&&&\neimp&&\seimp&\\
C\mbox{-Schur}& \Rightarrow & C\mbox{-wsc and Schur} & \Rightarrow & C\mbox{-wsc} & \Rightarrow & \mbox{wsc} 
\end{array}$$

All these implications are easy to check. If a constant $C$ is included both in the assumption and in the conclusion, the constant in the conclusion has the same value.

None of the displayed implications can be reversed, even if we allow the respective constants to change.
For the first implication from the left, it follows from Example~\ref{el-ex}. A reasoning for the implication $\neimp$ and the third one in the bottom line is a result of \cite{ka-pf-sp}. For the remaining two implications one can use the classical space $L^1(0,1)$.

\section{Proof of Theorem~\ref{positive}}

We start the proof with a simple fact based upon the standard ``sliding hump" argument. If $x\in\ell_1$ and $N\subset \en$, we denote by $x\r_N$ the element arising from $x$ by setting its coordinates outside $N$ to zero, i.e.,
$$x\r_N(n)=\begin{cases} x(n), & n\in N, \\ 0, & n\in\en\setminus N.\end{cases}$$

\begin{lemma}\label{hump}
Let $(y_n)$ be a  sequence in $\ell_1$  converging pointwise to $y$ with $\|y\|<\ep$. Then there exists a subsequence $(y_{n_k})$ and indices $0=N_0<N_1<N_2<\cdots$ such that
\[
\|y_{n_k}\r_{(N_{k-1}, N_k]}\|>\|y_{n_{k}}\|-\ep,\quad k\in\en.
\]
\end{lemma}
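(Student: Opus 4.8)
The plan is to run a standard sliding hump construction, building the subsequence $(y_{n_k})$ and the cut-off indices $(N_k)$ simultaneously by induction on $k$. Two facts drive the argument. First, on any fixed finite set of coordinates $\{1,\dots,m\}$ the pointwise convergence $y_n\to y$ is automatically norm convergence (on those coordinates everything lives in a finite-dimensional space), so the ``head'' $\|y_n\r_{\{1,\dots,m\}}\|$ tends to $\|y\r_{\{1,\dots,m\}}\|\le\|y\|<\ep$. Second, each individual $y_n$ lies in $\ell_1$, so its tail $\|y_n\r_{(m,\infty)}\|$ can be made arbitrarily small by taking $m$ large. The key bookkeeping idea is to exploit the strict inequality $\|y\|<\ep$ by setting $\eta=\ep-\|y\|>0$ and splitting the ``error budget'' $\ep$ as $(\ep-\tfrac\eta2)+\tfrac\eta2$ between the head and the tail.

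Concretely, I would set $N_0=0$ and, assuming $N_{k-1}$ and $n_{k-1}$ have been chosen, proceed in two steps. Since $\{1,\dots,N_{k-1}\}$ is finite and $y_n\to y$ pointwise, I first choose $n_k>n_{k-1}$ so large that $\|y_{n_k}\r_{\{1,\dots,N_{k-1}\}}\|<\ep-\tfrac\eta2$; this is possible because the limit of the head masses is $\|y\r_{\{1,\dots,N_{k-1}\}}\|\le\|y\|=\ep-\eta$. Then, using $y_{n_k}\in\ell_1$, I choose $N_k>N_{k-1}$ so large that $\|y_{n_k}\r_{(N_k,\infty)}\|<\tfrac\eta2$.

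With these choices the complement of the block $(N_{k-1},N_k]$ carries mass
\[
\|y_{n_k}\r_{\{1,\dots,N_{k-1}\}}\|+\|y_{n_k}\r_{(N_k,\infty)}\|<\left(\ep-\tfrac\eta2\right)+\tfrac\eta2=\ep,
\]
and since $\|y_{n_k}\|$ decomposes as this complementary mass plus $\|y_{n_k}\r_{(N_{k-1},N_k]}\|$, we immediately obtain the desired inequality $\|y_{n_k}\r_{(N_{k-1},N_k]}\|>\|y_{n_k}\|-\ep$. The construction keeps both $(n_k)$ and $(N_k)$ strictly increasing, so the blocks are well-defined and pairwise disjoint, which closes the induction.

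I do not expect a genuine obstacle here: the statement is exactly the classical sliding hump estimate. The only point requiring a little care is to force the total complementary mass to be strictly below $\ep$ rather than merely $\le\ep$, and this is precisely why the slack $\eta=\ep-\|y\|>0$ is introduced and divided between the head and tail contributions.
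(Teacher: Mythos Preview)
Your proof is correct and follows essentially the same sliding hump construction as the paper: first push the head mass below $\ep$ using pointwise convergence on a finite set of coordinates, then truncate the tail using $y_{n_k}\in\ell_1$. The only cosmetic difference is that you split the error budget explicitly via $\eta=\ep-\|y\|$, whereas the paper exploits the strict inequality $\|y\|<\ep$ directly at each step without naming the slack.
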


\begin{proof}
First we set $N_0=0$, $n_1=1$ and choose $N_1>0$ such that $\|y_{n_1}\r_{(0, N_1]}\|>\|y_{n_{1}}\|-\ep$.
 Suppose now that $k\in\en$ is such that we have already constructed $n_j$ for $1\le j\le k$ and $N_j$ for $0\le j\le k$.

Since $(y_n)$ converges pointwise to $y$, we can select $n_{k+1}>n_k$  so large that $\|y_n\r_{(N_0, N_k]}\|<\ep$ for every $n\geq n_{k+1}$. Thus, in particular 
\[
\|y_{n_{k+1}}\r_{(N_k,+\infty)}\|>\|y_{n_{k+1}}\|-\ep,
\]
so we can find $N_{k+1}>N_{k}$ such that 
$$\|y_{n_{k+1}}\r_{(N_{k},N_{k+1}]}\|>\|y_{n_{k+1}}\|-\ep.$$ 
This completes the construction.\end{proof}

Now we proceed with the proof of Theorem~\ref{positive}. Let $(x_k)$ be a bounded sequence in $X=\ell_1$ and $\ep>0$. We consider an arbitrary $c<\nc{x_k}$. 

We extract subsequences $(a_n)$ and $(b_n)$ from $(x_k)$ such that $c<\|a_n-b_n\|$ for $n\in\en$. Denoting $y_n=a_n-b_n$, we pass to a subsequence if necessary and assume thus  that $(y_n)$ pointwise converges to $y\in\ell_1$. Let 
$m\in\en$ be chosen  such that $\|y\r_{(m,+\infty)}\|<\ep$. By omitting finitely many elements of $(y_n)$ we achieve that 
\begin{equation}\label{eNko}
\|(y_n-y)\r_{[1,m]}\|<\ep,\quad n\in\en.
\end{equation}
Hence
\begin{equation}
\label{cN}
c<\|y_n\r_{[1,m]}\|+\|y_n\r_{(m,+\infty)}\|\leq \|y\r_{[1,m]}\|+\ep+\|y_n\r_{(m,+\infty)}\|,\quad n\in\en.
\end{equation}

Using Lemma~\ref{hump} applied to $(y_n\r_{(m,+\infty)})$ we obtain a subsequence $(y_{n_k})$ and indices $m=N_0<N_1<\cdots$ such that 
\begin{equation}
\label{hrb}
\|y_{n_k}\r_{(N_{k-1}, N_k]}\|>\|y_{n_{k}}\r_{(m,+\infty)}\|-\ep, 
\quad k\in\en. 
\end{equation}
Let $x^*\in\ell_\infty=\ell_1^*$ be defined as
\[
x^{*}(j)=\begin{cases} \operatorname{sign} y(j),& j\in[1,m],\\
                     \operatorname{sign} y_{n_k}(j),& j\in(N_{k-1}, N_k],\  k\in\en.
       \end{cases}              
\]
Then $\|x^*\|\le 1$, and, for each $k\in\en$, we have 
using~\eqref{eNko}, \eqref{hrb} and \eqref{cN}
\[
\aligned
x^*(y_{n_k})&=x^*(y_{n_k}\r_{[1,m]})+x^*(y_{n_k}\r_{(N_{k-1}, N_k]})+
\sum_{j\in(m,+\infty)\setminus(N_{k-1},N_k]} x^*(j)y_{n_k}(j)\\
            &> x^*(y\r_{[1,m]})-\ep+\|y_{n_{k}}\r_{(N_{k-1},N_k]}\|-\|y_{n_k}\r_{(m,+\infty)\setminus(N_{k-1},N_k]}\|\\
            &> \|y\r_{[1,m]}\|+\|y_{n_k}\r_{(m,+\infty)}\|-3\ep\\
            &> c-4\ep.
        \endaligned
\]

Therefore we have 
\[
x^*(a_{n_k}-b_{n_k})\geq c-4\ep.
\]
Up to a passing to a subsequence we can suppose that the sequence $(x^*(a_{n_k}))$ converges. Up to passing to a further  subsequence we can assume that the sequence $(x^*(b_{n_k}))$ converges as well.
Let $a^{**}$ and $b^{**}$ be  weak* cluster points of $(a_{n_k})$ and $(b_{n_k})$, respectively. Then $a^{**}, b^{**}\in\clu{X}{x_k}$ and
\[
\|a^{**}-b^{**}\|\geq (a^{**}-b^{**})(x^*)=\lim_{k\to\infty} x^*(a_{n_k}-b_{n_k}) \geq c-4\ep.
\] 
Since $c$ and $\ep$ are arbitrary, $\nc{x_k}\leq \de{x_k}$. 

Inequality \eqref{de-ca} from Proposition~\ref{prop-easy} finishes the proof.

\section{Construction of Example~\ref{el-ex}}

The construction is based upon Example~4 of \cite{ka-pf-sp}. It uses a standard renorming technique
(see, e.g. \cite[Proposition III.2.11]{hawewe}). 
We recall that $\beta\en$ is the \v{C}ech--Stone compactification of $\en$ and $M(\beta\en)$ is the space of all signed Radon measures on $\beta\en$ considered as the dual of $\ell_\infty$.

Let us fix $\alpha>0$ and consider the space 
\[
Y_\alpha=(\ell_1,\alpha\|\cdot\|_1)\oplus_1 (\ell_2,\|\cdot\|_2).
\]
Here $\|\cdot\|_1$ and $\|\cdot\|_2$ denote the usual norms on $\ell_1$ and $\ell_2$, respectively. 
Note that we have the following canonical identifications:
$$\begin{aligned}
Y_\alpha^*&=(\ell_\infty,\tfrac1\alpha\|\cdot\|_\infty)\oplus_\infty (\ell_2,\|\cdot\|_2), \text{ and}\\
Y_\alpha^{**}&=(M(\beta\en),\alpha\|\cdot\|_{M(\beta\en)})\oplus_1 (\ell_2,\|\cdot\|_2).
\end{aligned}$$

For $k\in\en$, let $x_k=(e_k,e_k)\in Y_\alpha$, where $e_k$ denotes the $k$-th canonical basic vector.
Let $X_\alpha$ be the closed linear span of the set $\{x_k:k\in\en\}$.

We claim that
\begin{equation}\label{Xalpha}
X_\alpha=\left\{(x,y)\in Y_\alpha:
x(n)=y(n), n\in\en\right\}.
\end{equation}
Indeed, the set on the right-hand side is a closed linear subspace of $Y_\alpha$ containing $x_k$ for each $k\in\en$ which verifies the inclusion `$\subset$'. To prove the converse one, let us take any element $(z,y)\in Y_\alpha$ satisfying $z(n)=y(n)$ for all $n\in\en$. Since $z\in\ell_1$, we get
\[
(z,y)=\sum_{k=1}^\infty  z(k)x_k\in X_\alpha
\]
as the series is absolutely convergent.

Let $T:\ell_1\to\ell_2$ denote the identity mapping. Since $T$ maps the unit ball of $\ell_1$ into the unit ball of $\ell_2$, we get $\|T\|\le 1$.
Therefore, for an arbitrary element $(x,y)\in X_\alpha$, we have
$$\alpha\|x\|_1 \le \|(x,y)\|_{X_\alpha}=\alpha\|x\|_1+\|Tx\|_2\le (\alpha+1)\|x\|_1.$$
Thus the projection on the first coordinate is an isomorphism of $X_\alpha$ onto $\ell_1$. In particular, $X_\alpha$ has the Schur property.

We further observe that $X_\alpha^{**}$ is canonically identified with the weak* closure of $X_\alpha$ in $Y_\alpha^{**}$, thus
\begin{equation}
\label{Xalpha**} X_\alpha^{**}=\{(\mu,y)\in M(\beta\en)\times \ell_2: 
 \mu(n)=y(n), n\in\en\}.
\end{equation}
Indeed, the set on the right-hand side is  a weak* closed linear subspace of $Y_\alpha^{**}$ containing $X_\alpha$, which proves the inclusion `$\subset$'.
To prove the converse one, we fix $(\mu,y)$ in the set on the right-hand side.
Take a bounded net $(u_\tau)$ in $\ell_1$ which weak* converges to $\mu$. For each $\tau$ there is a unique $y_\tau\in \ell_2$ such that $(u_\tau,y_\tau)\in X_\alpha$. Then $(y_\tau)$ is clearly a bounded net in $\ell_2$. Moreover, we will show that $(y_\tau)$ weak* (i.e. weakly) converges to $y$. Since the weak topology  on bounded sets in $\ell_2$ coincides with the topology of pointwise convergence, it suffices to show that $y_\tau$ pointwise converge to $y$. Indeed,
\[
y_\tau(n)=\mu_\tau(n) \to \mu(n)=y(n),\quad n\in\en.
\]

It follows that $X_\alpha$ is $L$-embedded in $X_\alpha^{**}$ because the projection $P:X_\alpha^{**}\to X_\alpha$ defined as
\[
P(\mu,y)=(\mu\r_\en,y),\quad (\mu,y)\in M(\beta\en)\times \ell_2,
\]
satisfies 
\[
\|(I-P)(\mu,y)\|_{X_\alpha^{**}}+\|P(\mu,y)\|_{X_\alpha^{**}}=\|(\mu,y)\|_{X_\alpha^{**}},\quad (\mu,y)\in X_\alpha^{**}.
\]

Further, for the sequence $(x_k)$, its weak$^*$ cluster points in 
$X_\alpha^{**}$
are equal to
\[
\{(\ep_t, 0): t\in \beta\en\setminus\en\},
\]
where $\ep_t$ denotes the Dirac measure at a point $t\in\beta\en$.

We claim that, for our sequence $(x_k)$, we have 
\begin{equation}
\label{est-alpha}
\nc{x_k}=2\alpha+\sqrt{2}\quad\text{and}\quad \de{x_k}=2\alpha.
\end{equation}
To see the first inequality, we observe that for each distinct $k,k'\in\en$ we have
\[
\|x_k-x_{k'}\|_{X_\alpha}=\alpha\|e_k-e_{k'}\|_1+\|e_k-e_{k'}\|_{2}=2\alpha+\sqrt{2}.
\]
The second one follows from the fact that, given $t,t'\in\beta\en\setminus\en$ distinct, then 
\[
\|(\ep_t,0)-(\ep_{t'},0)\|_{X_\alpha^{**}}=\|(\ep_t-\ep_{t'},0)\|_{X_\alpha^{**}}
=\alpha\|\ep_t-\ep_{t'}\|_{M(\beta\en)}=2\alpha.
\]
This verifies \eqref{est-alpha}.

Now we use the described procedure to construct the desired space $X$.
For $n\in\en$, let $\alpha_n=\frac1n$ and let $X_{\frac1n}$ be the space constructed for $\alpha_n$. Let 
\[
X=\left(\sum_{n=1}^\infty X_{\frac1n}\right)_{\ell_1}
\]
be the $\ell_1$-sum of the spaces $X_{\frac1n}$. We claim that $X$ is the  required space. 

First, since each $X_{\frac1n}$ has the Schur property, $X$, as their $\ell_1$-sum, possesses this property as well (this follows
by a straightforward modification of the proof that $\ell_1$ has the Schur property, see \cite[Theorem~5.19]{fhhmpz}).

Similarly, as an $\ell_1$-sum of $L$-embedded spaces, $X$ is $L$-embedded as well (see \cite[Proposition~1.5]{hawewe}).

Finally, fix $n\in\en$.  We consider a sequence $\wh{x}_k=(0,\dots,0,\stackrel{n\text{-th}}{x_k},0,\dots)$,
where the elements $x_k\in X_{\frac1n}$, $k\in\en$, are defined above.
Then, for any $k,k'\in\en$ distinct, 
\[
\|\wh{x}_k-\wh{x}_{k'}\|_X= \|x_k-x_{k'}\|_{X_{\frac1n}},
\] 
hence $\nc{\wh{x}_k}=\nc{x_k}>\sqrt2$ by \eqref{est-alpha}.

On the other hand,
\[
\de{\wh{x}_k}= \de{x_k}=\frac2n,
\]
again by \eqref{est-alpha}. So, 
\[
\nc{\wh{x}_k}>\frac n{\sqrt2}\;\de{\wh{x}_k}.
\]
Since $n\in\en$ is arbitrary, the conclusion follows.

\section{Final remarks}

In \cite[p. 57]{GoKaLi1} a Banach space is said to have the {\it $1$-strong Schur property}, whenever for any $\delta\in(0,2]$, any $\varepsilon>0$ and any normalized $\delta$-separated sequence in $X$ there is a subsequence which is $(\frac2\delta+\varepsilon)$-equivalent to the standard $\ell_1$ basis.

Although this property has adjective ``strong'', it is in fact weaker than our $1$-Schur property, as witnessed by the following proposition. We do not know whether these two properties are in fact equivalent but we conjecture that it is not the case. The reason for this opinion is the fact that the $1$-Schur property is a property of all bounded sequences while the $1$-strong Schur property is a property of very special sequences.

\begin{prop}\label{posledni} 
\begin{itemize}
	\item Any Banach space with the $1$-Schur property has the $1$-strong Schur property.
	\item Any Banach space with the $1$-strong Schur property has the $5$-Schur property.
\end{itemize}
\end{prop}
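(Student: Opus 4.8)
The plan is to treat both implications by comparing the $\ell_1$--equivalence constant of a suitable subsequence with the diameter $\de{\cdot}$ of the set of weak$^*$ cluster points, using inequality \eqref{de-ca} from Proposition~\ref{prop-easy} together with the elementary fact that every weak$^*$ cluster point of a difference sequence $(p_n-q_n)$ is a difference of two weak$^*$ cluster points of $(p_n)$ and $(q_n)$ (take ultrafilter limits); in particular, passing to differences enlarges the cluster diameter by a factor of at most $2$.

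For the first implication, fix a normalized $\delta$--separated sequence $(x_n)$ with $\delta\in(0,2]$ and $\ep>0$. Every tail $\{x_n:n\ge m\}$ has diameter at least $\delta$, so $\nc{x_n}\ge\delta$, and the $1$--Schur hypothesis upgrades this to $\de{x_n}\ge\delta$: the weak$^*$ cluster points of $(x_n)$ are mutually at distance $\ge\delta$. Since the space is in particular Schur, no subsequence is weakly Cauchy and Rosenthal's theorem yields an $\ell_1$--subsequence; the whole point is to force its lower constant up to $\tfrac\delta2-\ep$, i.e. to make it $(\tfrac2\delta+\ep)$--equivalent to the $\ell_1$--basis (the upper estimate being automatic for a normalized sequence). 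First I would select, by an iterated splitting of the index set, a subsequence $(x_{n_k})$ together with, for each finite sign pattern, a norming functional separating the two corresponding clouds of cluster points; the factor $\tfrac12$ enters because the ``common'' part of the vectors cancels in the test differences $x_{n_i}-x_{n_j}$, so a cluster separation $\delta$ yields per--vector contributions $\tfrac\delta2$. The hard part will be realizing all sign patterns simultaneously with the sharp constant $\tfrac\delta2$; this is exactly where the identity $\nc=\de$ must substitute for the coordinatewise sliding--hump estimate available in $\ell_1$, and where optimality of $\tfrac2\delta$ lives.

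For the second implication let $(x_k)$ be bounded, set $c=\nc{x_k}$, $d=\de{x_k}$, and assume $c>d$ (otherwise there is nothing to prove). Fix $\ep>0$ and extract differences $y_n=p_n-q_n$ from $(x_k)$ with $\|y_n\|>c-\ep$; since $\|y_n\|\le\diam\{x_k:k\ge N\}\to c$ we also have $\|y_n\|<c+\ep$ for large indices. Every weak$^*$ cluster point of $(y_n)$ is a difference of two cluster points of $(x_k)$, hence of norm $\le d$, so $\limsup_n f(y_n)\le d$ for every $f\in B_{X^*}$. Picking $f_n\in B_{X^*}$ with $f_n(y_n)>c-\ep$ and using this $\limsup$ bound to diagonalize, I obtain a subsequence with $\|y_{n_i}-y_{n_j}\|>c-d-2\ep$; after normalization it is a normalized $\delta'$--separated sequence with $\delta'\to\tfrac{c-d}{c}$.

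Now I would feed this normalized sequence into the $1$--strong Schur property: it produces a subsequence $(\tfrac2{\delta'}+\ep')$--equivalent to the $\ell_1$--basis, and restoring the norms $\|y\|\approx c$ shows that the corresponding differences satisfy $\|\sum_i a_i y_{n_i}\|\ge\lambda\sum_i|a_i|$ with $\lambda\to\tfrac12(c-d)$. Transporting the mutually singular Dirac masses of $\ell_1^{**}$ (at distance $2$) through the canonical isomorphism $\ell_1\to\overline{\span}\{y_{n_i}\}$ gives two weak$^*$ cluster points of $(y_{n_i})$ at mutual distance at least $\lambda$; being differences of cluster points of $(x_k)$, they lie within $2d$ of one another. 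Hence $\lambda\le 2d$, and letting $\ep\to0$ this reads $\tfrac12(c-d)\le 2d$, that is $\nc{x_k}\le 5\de{x_k}$ (a finer use of the same estimate in fact lowers the constant). The main obstacle is the separation step: it is crucial that the cluster points of the difference sequence have norm at most $d$—not merely that the $y_n$ are large—since this is what makes the normalized differences genuinely $\tfrac{c-d}{c}$--separated and thus delivers the correct constant when the $1$--strong Schur property is invoked.
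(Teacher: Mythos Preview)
Here there is a genuine gap. You correctly deduce from the $1$-Schur hypothesis that $\de{x_{k_n}}\ge\delta$ (and the same holds for every subsequence, since a subsequence of a $\delta$-separated sequence is again $\delta$-separated). But the passage from this to the existence of a subsequence $(\tfrac2\delta+\ep)$-equivalent to the $\ell_1$ basis is exactly the non-trivial step, and you leave it unfinished: your ``iterated splitting of the index set'' is only a sketch, and you yourself flag that ``the hard part will be realizing all sign patterns simultaneously with the sharp constant $\tfrac\delta2$'' without carrying it out. The paper closes this step in one line by invoking \cite[Theorem~3.2]{behrends}, which says precisely that a bounded sequence with $\de{x_{k_n}}\ge\delta$ for every subsequence admits, for each $\eta<\tfrac\delta2$, a subsequence satisfying $\|\sum_n\alpha_n x_{k_n}\|\ge\eta\sum_n|\alpha_n|$. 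Without either citing Behrends' result or supplying an equivalent argument, your proof of the first bullet is incomplete.

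\textbf{Second implication.} Your route here is correct and genuinely different from the paper's. The paper argues by a dichotomy: either $(x_k)$ stays within $c$ of a finite set (handled by pigeonhole and the triangle inequality --- this case is the source of the constant $5$), or $(x_k)$ has a $c$-separated subsequence, to which the $1$-strong Schur property and \cite[Lemma~5]{ka-pf-sp} are applied directly. You instead pass at once to differences $y_n=p_n-q_n$, use the ultrafilter observation that every weak$^*$ cluster point of $(y_n)$ has norm at most $d$ to manufacture a $(c-d-2\ep)$-separated subsequence, normalize, and feed it into the $1$-strong Schur property. This avoids the case split entirely and, as you hint, actually does better: the two transported Dirac masses are at distance at least $2\lambda$ rather than $\lambda$, so the comparison with $2d$ yields $c-d\le 2d$, i.e.\ the $3$-Schur property.
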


\begin{proof} Suppose that $X$ is a Banach space with the $1$-Schur property and fix 
$\delta\in(0,2]$, $\varepsilon>0$ and a normalized $\delta$-separated sequence $(x_k)$ in $X$.
Since $(x_k)$ is $\delta$-separated, $\nc{x_{k_n}}\ge\delta$ for each subsequence $(x_{k_n})$ of $(x_k)$.
By the $1$-Schur property we get $\de{x_{k_n}}\ge\delta$ for each subsequence. By \cite[Theorem 3.2]{behrends} there is a subsequence $(x_{k_n})$ such that
$$\left\|\sum_{n=1}^N \alpha_n x_{k_n}\right\|\ge \frac1{\frac2\delta+\varepsilon} \sum_{n=1}^N |\alpha_n|$$
for each $N\in\en$ and each choice of real numbers $\alpha_1,\dots,\alpha_N$. Note, that we used that
$$\frac1{\frac2\delta+\varepsilon}<\frac\delta2.$$
It follows that $(x_{k_n})$ is $(\frac2\delta+\varepsilon)$-equivalent to the standard $\ell_1$ basis.

Conversely, suppose that $X$ is a Banach space which has the $1$-strong Schur property. Let $(x_k)$ be a bounded sequence in $X$ and $c>0$ be such that $\nc{x_k}>5c$. We will show that $\de{x_k}\ge c$.
We will distinguish two cases:
\begin{itemize}
	\item[(a)] There is $n\in\en$ such that for each $k\in\en$ we have $\dd(x_k,\{x_1,\dots,x_n\})\le c$.
	\item[(b)] For each $n\in\en$ there is $k>n$ such that $\dd(x_k,\{x_1,\dots,x_n\})> c$.
\end{itemize}
It is clear that exactly one of these two cases takes place.

First suppose that the case (a) occurs. As $\nc{x_k}>5c$, there are two subsequences $(u_k)$ and $(v_k)$ of $(x_k)$ such that $\|u_k-v_k\|>5c$ for each $k\in\en$. For each $k\in \en$ fix $p_k,q_k\in \{1,\dots,n\}$ such that $\|u_k-x_{p_k}\|\le c$ and $\|v_k-x_{q_k}\|\le c$. Up to taking a subsequence we can assume that the sequence $(p_k)$ is constant. Denote the common value by $p$. Up to taking a further subsequence we can suppose that the sequence $(q_k)$ is constant as well. Let $q$ be the common value. Set $a=x_p$ and $b=x_q$. Let $u^{**}$ be a weak* cluster point of $(u_k)$  in $X^{**}$.
Similarly, let $v^{**}$ be a weak* cluster point of $(v_k)$. Then
$$\begin{aligned}
\|v^{**}-u^{**}\|&\ge\|b-a\|-\|v^{**}-b\|-\|u^{**}-a\|
\ge\|b-a\|-2c\\&\ge\|v_1-u_1\|-\|b-v_1\|-\|a-u_1\|-2c>5c-4c=c.
\end{aligned}$$
Hence $\de{x_k}>c$.

Next suppose that the case (b) occurs and fix an arbitrary $\varepsilon>0$. It is easy to construct a $c$-separated subsequence $(y_k)$ of $(x_k)$. As $(y_k)$ is bounded, without loss of generality we can suppose that the sequence $(\|y_k\|)$ converges to some $\alpha\ge 0$. As the sequence is $c$-separated, necessarily $\alpha\ge\frac c2$. Up to omitting a finite number of $(y_k)$ we can suppose that
$|\|y_k\|-\alpha|<\varepsilon\alpha$ for each $k\in\en$. Then the sequence $(\frac{y_k}{\|y_k\|})$ is a normalized sequence which is $\frac c{\alpha}-2\varepsilon$-separated. Indeed, for any distinct $k,l\in\en$ we have
$$\begin{aligned}\left\|\frac{y_k}{\|y_k\|}-\frac{y_l}{\|y_l\|}\right\| &\ge 
\left\|\frac{y_k}{\alpha}-\frac{y_l}{\alpha}\right\| -
\left\|\frac{y_k}{\|y_k\|}-\frac{y_k}{\alpha}\right\|-
\left\|\frac{y_l}{\|y_l\|}-\frac{y_l}{\alpha}\right\| \\ &
>\frac c\alpha - \left|\frac{\alpha-\|y_k\|}{\alpha}\right| -
\left|\frac{\alpha-\|y_l\|}{\alpha}\right|>\frac c\alpha-2\varepsilon.\end{aligned}$$
By the $1$-strong Schur property we can extract a subsequence of $\left(\frac{y_k}{\|y_k\|}\right)$
which is $\left(\frac{2}{\frac c\alpha-2\varepsilon}+\varepsilon\right)$-equivalent to the standard $\ell_1$-basis.
Thus
$$\de{\frac{y_k}{\|y_k\|}}\ge \frac{2}{\frac{2}{\frac c\alpha-2\varepsilon}+\varepsilon}$$
by \cite[Lemma 5]{ka-pf-sp}.
Since $\varepsilon>0$ is arbitrary, we get $\de{\frac{y_k}{\|y_k\|}}\ge \frac c\alpha$. It follows
that $\de{y_k}\ge c$, thus $\de{x_k}\ge c$. This completes the proof. 
\end{proof}

\begin{thebibliography}{1}

	\bibitem{behrends}
E.~Behrends.
\newblock New proofs of {R}osenthal's {$l^1$}-theorem and the  {J}osefson-{N}issenzweig theorem.
\newblock {\em Bull. Polish Acad. Sci. Math.}, 43(4):283--295 (1996), 1995. 

\bibitem{fhhmpz}
M.~Fabian, P.~Habala, P.~H{\'a}jek, V.~Montesinos~Santaluc{\'{\i}}a, J.~Pelant,
  and V.~Zizler.
\newblock {\em Functional analysis and infinite-dimensional geometry}.
\newblock CMS Books in Mathematics/Ouvrages de Math\'ematiques de la SMC, 8.
  Springer-Verlag, New York, 2001.


\bibitem{gode-handbook}
G.~Godefroy.
\newblock Renormings of {B}anach spaces.
\newblock In {\em Handbook of the geometry of {B}anach spaces, {V}ol. {I}},
  pages 781--835. North-Holland, Amsterdam, 2001.

\bibitem{GoKaLi1}
G.~Godefroy, N.~J. Kalton and D.~Li.
\newblock On subspaces of {$L^1$} which embed into $\ell_1$.
\newblock {\em J. Reine Angew. Math.}, 471:43--75, 1996.


\bibitem{GoKaLi2}
G.~Godefroy, N.~J. Kalton and D.~Li.
\newblock Operators between subspaces and quotients of {$L^1$}.
\newblock {\em Indiana Univ. Math. J.}, 49(1):245--286, 2000.

\bibitem{GHP}
A.S.~Granero, J.M.~Hern\'andez and H.~Pfitzner.
\newblock The distance $ dist (\mathcal{B},X)$ when $ \mathcal{B}$ is a boundary of $ B(X^{**})$.
\newblock Proc. Amer. Math. Soc. 139(3):1095-1098, 2011. 

\bibitem{hawewe}
P.~Harmand, D.~Werner, and W.~Werner.
\newblock {\em {$M$}-ideals in {B}anach spaces and {B}anach algebras}, volume
  1547 of {\em Lecture Notes in Mathematics}.
\newblock Springer-Verlag, Berlin, 1993.

\bibitem{ka-pf-sp}
O.\ Kalenda, H.\ Pfitzner and J.\ Spurn\'y.
\newblock On quantification of weak sequential completeness.
\newblock {\em J. Funct. Anal.}, 260(10):2986--2996, 2011.

\end{thebibliography}

As we remarked in the beginning of the paper, we worked only with real spaces. However, all the results except possibly for Proposition~\ref{posledni} are true also for complex spaces. Let us explain it.

Let $X$ be a complex Banach space. Denote by $X_R$ the real version of $X$, i.e., the same space considered over the reals. Given a bounded sequence $(x_k)$ in $X$, all the considered quantities ($\dd$, $\dh$, $\delta$, $\operatorname{ca}$) are the same for $X$ and for $X_R$. For the quantities $\dd$, $\dh$ and $\delta$ it is explained in \cite[Section 5]{ka-pf-sp}, for the quantity $\operatorname{ca}$ it is trivial. Hence, in particular, $X$ is $C$-weakly sequentially complete (or has the $C$-Schur property) if and only if $X_R$ has this property. Thus Propositions~\ref{prop-easy} and~\ref{reflexive} are valid for complex spaces as well. 

The proof of Theorem~\ref{positive} can be easily adapted to the complex case. It is enough to define $x^*$ using complex signs (i.e., appropriate complex units) and to estimate from below the real part of $x^*(y_{n_k})$. The proof of Example~\ref{el-ex} works in the complex case without any change.

As for Proposition~\ref{posledni}, the second part works for complex spaces without any change. It is not clear whether the first part is valid because the proof uses a result of \cite{behrends} which works for real spaces and it is an open problem whether it is valid for complex spaces as well. It is also not clear, whether the $1$-strong Schur property of $X$ is equivalent to that of $X_R$.

\section*{Acknowledgement}

We are grateful to the referee for helpful comments which led to an improvement of our paper.

\end{document}